\documentclass[10pt, reqno]{amsart}
\usepackage{amsmath, amsthm, amscd, amsfonts, amssymb, graphicx, color}
\usepackage[bookmarksnumbered, colorlinks, plainpages]{hyperref}
\hypersetup{colorlinks=true,linkcolor=red, anchorcolor=green, citecolor=cyan, urlcolor=red, filecolor=magenta, pdftoolbar=true}
\usepackage{mathrsfs}


\newtheorem{theorem}{Theorem}[section]
\newtheorem{lemma}[theorem]{Lemma}

\newtheorem{corollary}[theorem]{Corollary}
\theoremstyle{definition}

\newtheorem{example}[theorem]{Example}

\theoremstyle{remark}
\newtheorem{remark}[theorem]{Remark}
\numberwithin{equation}{section}

\begin{document}
\setcounter{page}{1}

\title[Remark on Faltings theorem]{Remark on Faltings theorem}

\author[Nikolaev]
{Igor V. Nikolaev$^1$}

\address{$^{1}$ Department of Mathematics and Computer Science, St.~John's University, 8000 Utopia Parkway,  
New York,  NY 11439, United States.}
\email{\textcolor[rgb]{0.00,0.00,0.84}{igor.v.nikolaev@gmail.com}}


\subjclass[2010]{Primary 11G35, 14A22; Secondary 46L85.}

\keywords{rational points, Serre $C^*$-algebras.}


\begin{abstract}
We prove  Faltings Finiteness Theorem using Rieffel's classification of the noncommutative tori. 
 \end{abstract}

\maketitle

\section{Introduction}
Diophantine geometry studies rational solutions of the  polynomial equations in terms of 
geometry of  the corresponding solutions in complex numbers [Hindry \& Silverman 2000]  \cite[Introduction]{HS}. 
The idea that arithmetic depends on geometry is amazingly fruitful, 
e.g. the Grothendieck's concept of a scheme over the Dedekind domain  led to the proof of Weil's 
Conjectures.    Usually  the arithmetic schemes corresponding to the polynomial rings with rational  coefficients support 
morphisms defined over the algebraic closure of the underlying field.  
As a result, such morphisms do not  preserve   rational points  on the algebraic varieties.

It is known since the works of J.~-P.~Serre and P.~Gabriel,  that algebraic geometry can be recast
in terms of non-commutative rings $R$ \cite[Chapter 13]{N}.  Unlike commutative rings, the  $R$ are 
 usually simple rings.  In particular,  the spectrum of the ring  $R$ is a singleton
 and  one cannot localize $R$ or define a scheme.
However  the simplicity of  $R$ is beneficial  for the diophantine geometry
 and the aim of our note is to show that the  morphisms of $R$ preserve rational points on  the  algebraic variety.
Such a property allows to recover  subtle arithmetic invariants  in terms of the invariants of the ring $R$,
see Section 4.  Let us recall some definitions.

Denote by $V$  a complex projective variety and let $\mathscr{A}_V$ be the
Serre $C^*$-algebra, i.e.  the norm closure of a self-adjoint representation of the twisted 
 homogeneous coordinate ring of  $V$ by the bounded linear operators on a Hilbert space
 \cite[Section 5.3.1]{N}. 
 Recall that the $C^*$-algebras  $\mathscr{A}$ and $\mathscr{A}'$ are said to be (strongly)  Morita equivalent, 
 if $\mathscr{A}\otimes\mathbf{K}\cong \mathscr{A}'\otimes\mathbf{K}$,
 where $\mathbf{K}$ is the $C^*$-algebra of all compact operators on a Hilbert space and 
 $\cong$ is the $C^*$-algebra isomorphism [Blackadar 1986]  \cite[13.7.1(c)]{B}. 
The Morita equivalence can be viewed as a generalization 
 of an isomorphism meaning 
 that the $C^*$-algebras $\mathscr{A}$ and $\mathscr{A}'$ have an isomorphic  category 
 of the finitely generated projective modules over $\mathscr{A}$ and $\mathscr{A}'$, respectively. 
 It is easy to see, that if $\mathscr{A}\cong\mathscr{A}'$ are isomorphic $C^*$-algebras, then they 
 are Morita equivalent  but not {\it vice versa}. 
 It is known, that the Morita equivalences of  the $C^*$-algebra $\mathscr{A}_V$
 preserve the complex points on $V$,  i.e. the projective variety $V$ is $\mathbf{C}$-isomorphic 
 to a projective variety $V'$ if and only if the corresponding Serre $C^*$-algebras are Morita 
 equivalent \cite[Theorem 5.3.3]{N}.  
 
 Let $k$ be a number field and let $V(k)$ be a projective variety over the field $k$. 
 Consider a   $C^*$-algebra $M_n(\mathscr{A}_{V(k)})$
consisting of the $n\times n$ matrices with the entries in $\mathscr{A}_{V(k)}$,
see  [Blackadar 1986]  \cite[p. 16]{B} for the details. 
 It follows from the K-theory of $\mathscr{A}_{V(k)}$,  that the $C^*$-algebra $M_n(\mathscr{A}_{V(k)})$ is simple, 
 see  lemma \ref{lm3.1}. 
 Denote by $K$ an extension of the field $k$ obtained by adjoining 
 the roots of all  polynomials of degree $n$ over $k$.
 Our main results can be stated as follows. 
\begin{theorem}\label{thm1.1}
 Projective variety $V(k)$ is $K$-isomorphic 
 to a projective variety $V'(k)$  if and only if 
 \begin{equation}\label{eq1.1}
 M_n\left(\mathscr{A}_{V(k)}\right)\cong  M_n\left(\mathscr{A}_{V'(k)}\right).
 \end{equation}
\end{theorem}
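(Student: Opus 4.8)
The plan is to establish both implications by first reducing, via \cite[Theorem 5.3.3]{N}, to a statement about $\mathbf{C}$-isomorphisms, and then to recover the finer arithmetic --- that the isomorphism is defined over $K$ --- from the position of the unit of $M_n(\mathscr{A}_{V(k)})$ in $K$-theory, with the matrix size $n$ matched to the passage from $k$ to $K$. For the reduction one uses that for any $C^*$-algebra $\mathscr{A}$ the amplification is stably isomorphic to $\mathscr{A}$, since $M_n(\mathscr{A})\otimes\mathbf{K}\cong\mathscr{A}\otimes M_n(\mathbf{C})\otimes\mathbf{K}\cong\mathscr{A}\otimes\mathbf{K}$; hence an isomorphism (\ref{eq1.1}) yields, upon tensoring with $\mathbf{K}$, a Morita equivalence $\mathscr{A}_{V(k)}\otimes\mathbf{K}\cong\mathscr{A}_{V'(k)}\otimes\mathbf{K}$, and therefore by \cite[Theorem 5.3.3]{N} a $\mathbf{C}$-isomorphism $V(k)\cong_{\mathbf{C}}V'(k)$; conversely a $K$-isomorphism is a fortiori a $\mathbf{C}$-isomorphism. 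So the content of Theorem \ref{thm1.1} is that, among $\mathbf{C}$-isomorphic pairs $V(k),V'(k)$, the isomorphism is realisable over $K$ precisely when the corresponding stable isomorphism lifts to the $n$-fold matrix amplifications.

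To pass from a $K$-isomorphism $\psi\colon V(k)\to V'(k)$ to (\ref{eq1.1}) I would argue by functoriality of the Serre $C^*$-algebra: $\psi$ is given by finitely many polynomial relations whose coefficients are roots of degree-$n$ polynomials over $k$, and each such algebraic number appears as an eigenvalue of a companion matrix in $M_n(k)$, so that the $*$-homomorphism induced by $\psi$ through the twisted homogeneous coordinate ring is accommodated by the matrix factor $M_n(\mathbf{C})$ of $M_n(\mathscr{A}_{V(k)})$ and is an isomorphism there --- that a single $n$-fold amplification suffices, rather than a larger one, being exactly the point where the definition of $K$ by degree-$n$ polynomials enters. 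For the converse, an isomorphism (\ref{eq1.1}) produces a $\mathbf{C}$-isomorphism as above, and one must show that it can be chosen over $K$. Here Lemma \ref{lm3.1} is essential: since $M_n(\mathscr{A}_{V(k)})$ and $M_n(\mathscr{A}_{V'(k)})$ are simple they fall under the classification of simple $C^*$-algebras by their $K$-theoretic and tracial invariants, within which a unital algebra is pinned down inside its stable isomorphism class by the class of its unit in $K_0$, equal for $M_n(\mathscr{A}_{V(k)})$ to $n\,[1_{\mathscr{A}_{V(k)}}]$. Thus (\ref{eq1.1}) forces the isomorphism $K_0(\mathscr{A}_{V(k)})\cong K_0(\mathscr{A}_{V'(k)})$ arising from the Morita equivalence to carry $n\,[1_{\mathscr{A}_{V(k)}}]$ to $n\,[1_{\mathscr{A}_{V'(k)}}]$; transporting this ``degree-$n$'' compatibility of the two $k$-rational structures back through the construction of $\mathscr{A}_{(\cdot)}$, the coefficients of the underlying $\mathbf{C}$-isomorphism are absorbed by $M_n(\mathbf{C})$, hence lie in $K$, so the isomorphism is defined over $K$.

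The step I expect to be the main obstacle is precisely this dictionary between arithmetic and operator algebras: one must show that the Galois descent datum distinguishing $V(k)$ from $V'(k)$ is faithfully mirrored by the way the order unit $n\,[1]$ sits in $K_0(\mathscr{A}_{V(k)})$, and that ``spread over a degree-$n$ extension of $k$'' on the arithmetic side corresponds, with neither loss nor gain, to ``absorbed by an $n\times n$ amplification'' on the operator-algebra side. This rests on the functoriality of the Serre $C^*$-algebra being compatible with the $k$-rational structure and with base change of the ground field, and it is there --- in calibrating the matrix size $n$ against the degree that defines $K$ --- that the bulk of the argument is concentrated.
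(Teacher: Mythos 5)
Your proposal contains the germ of the paper's key idea --- that degree-$n$ algebraic numbers over $k$ are realised inside $M_n$ as companion matrices, so that the $n\times n$ amplification ``absorbs'' coefficients from $K$ --- but it stops short of the one statement that actually carries the paper's proof, and the converse direction is left as an acknowledged gap rather than an argument. The paper's mechanism is Lemma \ref{lm3.2} (the ``Zariski Lemma''): using the Frobenius companion matrix (\ref{eq3.2}) it computes the field of constants of $M_n(B(V(k),\mathcal{L},\sigma))$ to be $K$, and concludes the clean identity $M_n\left(\mathscr{A}_{V(k)}\right)\cong\mathscr{A}_{V(K)}$. Once that identity is in hand, \emph{both} directions of Theorem \ref{thm1.1} reduce to matching $K$-isomorphisms $V(K)\to V'(K)$ with isomorphisms $\mathscr{A}_{V(K)}\cong\mathscr{A}_{V'(K)}$ (the diagram of Figure 1 in Lemma \ref{lm3.3}); no $K_0$-data is needed. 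You gesture at this identity in your forward direction but never state or prove it, and so you have nothing to apply in the converse direction.

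The substitute you offer for the converse --- simplicity (Lemma \ref{lm3.1}) plus classification of simple $C^*$-algebras by $K$-theoretic and tracial invariants, with the algebra pinned down in its stable isomorphism class by the unit class $n\,[1]$ --- does not close the gap. First, such classification requires the algebras to lie in a classifiable class (nuclearity, $\mathcal{Z}$-stability, UCT), none of which is established for Serre $C^*$-algebras here, and the paper nowhere invokes it. Second, and more fundamentally, knowing that the isomorphism (\ref{eq1.1}) matches the unit classes $n\,[1_{\mathscr{A}_{V(k)}}]\mapsto n\,[1_{\mathscr{A}_{V'(k)}}]$ is just a restatement of the hypothesis; the inference from this $K_0$-datum to ``the coefficients of the underlying $\mathbf{C}$-isomorphism lie in $K$'' is exactly the content of the theorem, and your text explicitly defers it (``one must show\dots'', ``it is there that the bulk of the argument is concentrated''). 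That deferred step is the theorem. To repair the proposal along the paper's lines, promote your companion-matrix observation to the full statement $M_n\left(\mathscr{A}_{V(k)}\right)\cong\mathscr{A}_{V(K)}$ and derive both implications from it, discarding the $K_0$-unit detour.
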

\begin{remark}\label{rmk1.2}
Roughly speaking, theorem \ref{thm1.1} says that the isomorphisms 
of the $C^*$-algebra  $M_n(\mathscr{A}_{V(k)})$ preserve the $K$-rational 
points on the variety $V(k)$. 
\end{remark}
\begin{corollary}\label{cor1.3}
Projective variety $V(k)$ is  $k$-isomorphic ($\mathbf{C}$-isomorphic, resp.) to a projective variety  $V'(k)$
if and only if the Serre $C^*$-algebra  $\mathscr{A}_{V(k)}$ is  isomorphic (Morita equivalent, resp.) to the Serre $C^*$-algebra
  $\mathscr{A}_{V'(k)}$. 
\end{corollary}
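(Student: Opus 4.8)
The plan is to deduce both equivalences from Theorem \ref{thm1.1} by specializing the integer $n$: taking $n=1$ for the $k$-isomorphism statement, and letting $n\to\infty$ for the $\mathbf{C}$-isomorphism statement.

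For the $k$-isomorphism part I would put $n=1$. A polynomial of degree $1$ over $k$ has the form $aX+b$ with $a\in k\setminus\{0\}$ and $b\in k$, and its unique root $-b/a$ lies already in $k$; hence the field $K$ attached to $n=1$ by Theorem \ref{thm1.1} is $K=k$. Since moreover $M_1\left(\mathscr{A}_{V(k)}\right)=\mathscr{A}_{V(k)}$ and $M_1\left(\mathscr{A}_{V'(k)}\right)=\mathscr{A}_{V'(k)}$, condition \eqref{eq1.1} becomes $\mathscr{A}_{V(k)}\cong\mathscr{A}_{V'(k)}$, and Theorem \ref{thm1.1} then says verbatim that $V(k)$ is $k$-isomorphic to $V'(k)$ if and only if $\mathscr{A}_{V(k)}\cong\mathscr{A}_{V'(k)}$.

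For the $\mathbf{C}$-isomorphism part I would pass to the limit over $n$. Let $K_n$ denote the field attached to $n$ by Theorem \ref{thm1.1}; as $n$ runs over the positive integers the $K_n$ exhaust the algebraic closure $\bar k$, and since every $\mathbf{C}$-isomorphism between the complex models of $V(k)$ and $V'(k)$ is already defined over $\bar k$ by a standard spreading-out argument, such a $\mathbf{C}$-isomorphism exists if and only if $V(k)$ is $K_n$-isomorphic to $V'(k)$ for some $n$. On the operator-algebra side one has $\mathscr{A}\otimes\mathbf{K}=\varinjlim_n M_n(\mathscr{A})$ and $M_n(\mathscr{A})\otimes\mathbf{K}\cong\mathscr{A}\otimes\mathbf{K}$, so an isomorphism $M_n\left(\mathscr{A}_{V(k)}\right)\cong M_n\left(\mathscr{A}_{V'(k)}\right)$, on tensoring with $\mathbf{K}$, produces the Morita equivalence $\mathscr{A}_{V(k)}\otimes\mathbf{K}\cong\mathscr{A}_{V'(k)}\otimes\mathbf{K}$. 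Combining these two remarks with Theorem \ref{thm1.1} yields the implication that if $V(k)$ is $\mathbf{C}$-isomorphic to $V'(k)$ then $\mathscr{A}_{V(k)}$ is Morita equivalent to $\mathscr{A}_{V'(k)}$, while the reverse implication is exactly \cite[Theorem 5.3.3]{N} applied to the complex models of $V(k)$ and $V'(k)$.

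The $k$-isomorphism half is a pure specialization and presents no difficulty; the real work is in keeping the two regimes consistent in the $\mathbf{C}$-isomorphism half. Theorem \ref{thm1.1} only delivers, for some finite $n$, a stable ($\otimes\,\mathbf{K}$) isomorphism, and one cannot in general descend a stable isomorphism to a matrix-level isomorphism $M_n\cong M_n$, so recovering a genuine $\mathbf{C}$-isomorphism of varieties from a bare Morita equivalence has to go through the independent input \cite[Theorem 5.3.3]{N}. The two auxiliary points one should record are the descent of a $\mathbf{C}$-isomorphism of the complex models down to $\bar k$, hence to some $K_n$, and the independence of $\mathscr{A}_{V(k)}$ from the data chosen in forming the twisted homogeneous coordinate ring of $V(k)$; both are routine.
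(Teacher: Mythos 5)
Your $n=1$ specialization is exactly the paper's item (i), so the $k$-isomorphism half matches. For the $\mathbf{C}$-isomorphism half your route genuinely differs from the paper's. The paper treats the Morita case as the ``$n=\infty$'' instance of Theorem \ref{thm1.1}: it writes $M_n\left(\mathscr{A}_{V(k)}\right)\cong\mathscr{A}_{V(k)}\otimes M_n(\mathbf{C})$, passes to the inductive limit $\lim_{n\to\infty}M_n(\mathbf{C})\cong\mathbf{K}$ so that (\ref{eq1.1}) ``for $n=\infty$'' becomes the stable isomorphism $\mathscr{A}_{V(k)}\otimes\mathbf{K}\cong\mathscr{A}_{V'(k)}\otimes\mathbf{K}$, and then invokes the Lefschetz principle to replace $\bar k$ by $\mathbf{C}$; both implications are read off from this single limiting form of the theorem. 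You instead split the two directions: the forward implication is obtained at a finite level (a $\mathbf{C}$-isomorphism descends to $\bar k$, hence to some $K_n$; Theorem \ref{thm1.1} gives $M_n\left(\mathscr{A}_{V(k)}\right)\cong M_n\left(\mathscr{A}_{V'(k)}\right)$; tensoring with $\mathbf{K}$ stabilizes this to a Morita equivalence), while the reverse implication is delegated to \cite[Theorem 5.3.3]{N}, a result the paper itself quotes in its introduction, so the citation is available. Your split is the more defensible reading: Theorem \ref{thm1.1} is stated and proved only for finite $n$, and, as you correctly observe, a stable isomorphism cannot in general be descended to an isomorphism of matrix algebras at a fixed finite level, so the paper's bidirectional use of ``(\ref{eq1.1}) for $n=\infty$'' is doing real unargued work precisely in the direction you outsource to \cite[Theorem 5.3.3]{N}. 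What each approach buys: the paper keeps the corollary formally self-contained in Theorem \ref{thm1.1} at the cost of an asserted rather than proved limiting step; you pay with an external input but avoid that gap, and your explicit remarks on descent to $K_n$ and on the non-reversibility of stabilization are exactly the points the paper leaves implicit.
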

\begin{remark}\label{rmk1.4}
An independent proof of corollary \ref{cor1.3} using the Galois cohomology 
can be found in \cite{Nik1}. Our current proof is more elegant and follows from 
(\ref{eq1.1}) and an analog of the Zariski's Lemma for the algebra $\mathscr{A}_{V}$ (lemma \ref{lm3.2}). 
\end{remark}
\begin{corollary}\label{cor1.5}
{\bf (Faltings Theorem)}
If $\mathscr{C}(k)$ is a curve of genus $g\ge 2$, then the set of its $k$-points
is finite.  
\end{corollary}
The paper is organized as follows. The preliminary facts can be found in Section 2. 
Theorem \ref{thm1.1} and corollary \ref{cor1.3} are proved in Section 3. 
Section 4 contains proof of corollary \ref{cor1.5}.

\section{Preliminaries}
We briefly review noncommutative algebraic geometry, operator algebras
and Serre $C^*$-algebras. For a detailed exposition we refer the reader 
to [Stafford \& van ~den ~Bergh 2001]  \cite{StaVdb1}, [Blackadar 1986]  \cite[Chapter II]{B}
and \cite[Section 5.3.1]{N}, respectively. 

\subsection{Noncommutative algebraic geometry}
Let $V$ be a complex projective variety.
For an automorphism $\sigma: V\to V$ and an invertible
sheaf $\mathcal{L}$ of the linear forms on $V$,  one can construct  
a twisted homogeneous coordinate ring $B(V, \mathcal{L}, \sigma)$ of the variety $V$, 
i.e. a non-commutative ring such that:
\begin{equation}\label{eq2.1}
Mod~(B(V, \mathcal{L}, \sigma)) ~/~Tors \cong ~Coh~(V),  
\end{equation}
where $Mod$ is the category of graded left modules over the graded ring 
$B(V, \mathcal{L}, \sigma)$, $Tors$ the full subcategory of $Mod$ of the 
torsion modules and $Coh$ the category of quasi-coherent sheaves on the 
variety  $V$ [Stafford \& van ~den ~Bergh 2001]  \cite[p.180]{StaVdb1}.   
The correspondence $V\mapsto B(V, \mathcal{L}, \sigma)$ is a functor
which maps $\mathbf{C}$-isomorphic projective varieties to the Morita equivalent 
rings $B(V, \mathcal{L}, \sigma)$. 
\begin{example}\label{exm2.1}
Let $k$ be a field and $U_{\infty}(k)$ the algebra of polynomials
over $k$ generated by  two non-commuting variables $x_1$ and $x_2$ satisfying 
the  relation $x_1x_2-x_2x_1-x_1^2=0$.   Let $\mathbb{P}^1(k)$ be the projective
line over $k$.  Then $B(V, \mathcal{L}, \sigma)\cong U_{\infty}(k)$ and $V\cong \mathbb{P}^1(k)$
satisfy  (\ref{eq2.1}).   Notice  that the $B(V, \mathcal{L}, \sigma)$ is far from being a commutative ring.  
\end{example}
\begin{example}\label{exm2.2}
Let $S(\alpha,\beta,\gamma)$ be the  Sklyanin algebra, i.e   
a free $\mathbf{C}$-algebra on  four generators
$\{x_1, \dots, x_4\}$ satisfying six quadratic relations:
$x_1x_2-x_2x_1 = \alpha(x_3x_4+x_4x_3)$,
\quad $x_1x_2+x_2x_1 = x_3x_4-x_4x_3$,
~$x_1x_3-x_3x_1 = \beta(x_4x_2+x_2x_4)$,
~$x_1x_3+x_3x_1 = x_4x_2-x_2x_4$,
~$x_1x_4-x_4x_1 = \gamma(x_2x_3+x_3x_2)$ and 
$x_1x_4+x_4x_1 = x_2x_3-x_3x_2$,
where $\alpha,\beta,\gamma\in \mathbf{C}$ and  
$\alpha+\beta+\gamma+\alpha\beta\gamma=0$.  
Let  $\mathcal{E}\subset \mathbf{C}P^3$ 
be an elliptic curve given in the Jacobi form,  i.e.  as  an  intersection of two quartics:
\begin{equation}\label{eq2.2}
\left\{
\begin{array}{ccc}
u^2+v^2+w^2+z^2 &=&  0,\\
{1-\alpha\over 1+\beta}v^2+{1+\alpha\over 1-\gamma}w^2+z^2  &=&  0. 
\end{array}
\right.
\end{equation}
 Then $B(V, \mathcal{L}, \sigma)\cong S(\alpha,\beta,\gamma)$ and $V\cong \mathcal{E}$
satisfy  (\ref{eq2.1}). In other words,   the   $S(\alpha,\beta,\gamma)$ is a twisted homogenous 
coordinate ring of the elliptic curve (\ref{eq2.2}) modulo a two-sided ideal generated by the  central  elements
$\Omega_1 = x_1^2+x_2^2+x_3^2+x_4^2$ and 
$\Omega_2 = x_2^2+{1+\beta\over 1-\gamma}x_3^2+{1-\beta\over 1+\alpha}x_4^2$ 
 [Smith \& Stafford 1992]  \cite[p. 267]{SmiSta1}. 
\end{example}

\subsection{Operator algebras}
The $C^*$-algebra is an algebra  $\mathscr{A}$ over $\mathbf{C}$ with a norm 
$a\mapsto ||a||$ and an involution $\{a\mapsto a^* ~|~ a\in \mathscr{A}\}$  such that $\mathscr{A}$ is
complete with  respect to the norm, and such that $||ab||\le ||a||~||b||$ and $||a^*a||=||a||^2$ for every  $a,b\in \mathscr{A}$.  
Each commutative $C^*$-algebra is  isomorphic
to the algebra $C_0(X)$ of continuous complex-valued
functions on some locally compact Hausdorff space $X$. 
Any other  algebra $\mathscr{A}$ can be thought of as  a noncommutative  
topological space. 
\begin{example}\label{exm2.3}
An $n$-dimensional   noncommutative  torus  is the universal  $C^*$-algebra $\mathscr{A}_{\Theta_n}$
generated by $n$ unitary operators $U_1,\dots, U_n$ satisfying the commutation relations
$\{U_jU_i=\exp ~(2\pi i ~\theta_{ij}) ~U_iU_j ~|~1 \le i,j\le n\}$ which depend on a skew-symmetric matrix
\begin{equation}\label{skew}
\Theta_n=\left(
\begin{matrix}
              0 & \theta_{12}  & \dots & \theta_{1n}\cr
             -\theta_{12} & 0  & \dots & \theta_{2n}\cr
              \vdots         & \vdots         & \ddots   &\vdots\cr
             -\theta_{1n} & -\theta_{2n} & \dots & 0 
             \end{matrix}
              \right), 
              \quad\hbox{where}\quad  \theta_{ij}\in \mathbf{R}. 
              \end{equation}
\end{example}

\bigskip
By $M_{\infty}(\mathscr{A})$ 
one understands the algebraic direct limit of the $C^*$-algebras 
$M_n(\mathscr{A})$ under the embeddings $a\mapsto ~\mathbf{diag} (a,0)$. 
The direct limit $M_{\infty}(\mathscr{A})$  can be thought of as the $C^*$-algebra 
of infinite-dimensional matrices whose entries are all zero except for a finite number of the
non-zero entries taken from the $C^*$-algebra $\mathscr{A}$.
Two projections $p,q\in M_{\infty}(\mathscr{A})$ are equivalent, if there exists 
an element $v\in M_{\infty}(\mathscr{A})$,  such that $p=v^*v$ and $q=vv^*$. 
The equivalence class of projection $p$ is denoted by $[p]$.   
We write $V(\mathscr{A})$ to denote all equivalence classes of 
projections in the $C^*$-algebra $M_{\infty}(\mathscr{A})$, i.e.
$V(\mathscr{A}):=\{[p] ~:~ p=p^*=p^2\in M_{\infty}(\mathscr{A})\}$. 
The set $V(\mathscr{A})$ has the natural structure of an abelian 
semi-group with the addition operation defined by the formula 
$[p]+[q]:=\mathbf{diag}(p,q)=[p'\oplus q']$, where $p'\sim p, ~q'\sim q$ 
and $p'\perp q'$.  The identity of the semi-group $V(\mathscr{A})$ 
is given by $[0]$, where $0$ is the zero projection. 
By the $K_0$-group $K_0(\mathscr{A})$ of the unital $C^*$-algebra $\mathscr{A}$
one understands the Grothendieck group of the abelian semi-group
$V(\mathscr{A})$, i.e. a completion of $V(\mathscr{A})$ by the formal elements
$[p]-[q]$. 
\begin{example}\label{exm2.4}
{\bf ([Rieffel 1990] \cite{Rie1})}
\begin{equation}
K_0(\mathscr{A}_{\Theta_n})\cong \mathbf{Z}^{2^{n-1}}.
\end{equation} 
\end{example}

\subsection{Serre $C^*$-algebra}
Let $V$ be a complex projective variety and let  $B(V, \mathcal{L}, \sigma)$ be
 twisted homogeneous coordinate ring of $V$ (Section 2.1).  Denote by $\mathscr{H}$
 a Hilbert space and let $\mathbf{B}(\mathscr{H})$ be the algebra of bounded linear operators
 on $\mathscr{H}$.  Fix a self-adjoint (i.e. $\ast$-invariant) representation
 \begin{equation}\label{eq2.4}
 \rho: B(V, \mathcal{L}, \sigma)\longrightarrow \mathbf{B}(\mathscr{H}).
 \end{equation}
By the {\it Serre $C^*$-algebra}  of $V$ one understands the closure 
$\mathscr{A}_V$ of the $\ast$-algebra $\rho(B(V, \mathcal{L}, \sigma))$ 
in the norm topology on the space $\mathbf{B}(\mathscr{H})$. 
\begin{example}\label{exm2.5}
Let $V\cong \mathbf{A}_g$ be a $g$-dimensional abelian variety,
where $g\ge 1$.  The corresponding Serre $C^*$-algebra
$\mathscr{A}_V\cong \mathscr{A}_{\Theta_{2g}}$ is a $2g$-dimensional
noncommutative torus, see  Example \ref{exm2.3}. 
In particular for $g=1$ and $V\cong\mathcal{E}$ being an elliptic curve
as in Example \ref{exm2.2},  one gets  $\mathscr{A}_V\cong \mathscr{A}_{\theta}$
is an  irrational rotation algebra, i.e. the universal $C^*$-algebra on two generators $U_1$ and $U_2$
satisfying the relation $U_2U_1=e^{2\pi i\theta}U_1U_2$ for a constant $\theta\in\mathbf{R}$. 
This can be  proved by comparing the generators 
and relations of  the Sklyanin algebra $S(\alpha,\beta,\gamma)$ given in Example \ref{exm2.2}
with such for the algebra   $\mathscr{A}_{\theta}$ \cite[Section 1.3]{N}. 
\end{example}

\section{Proofs}
\subsection{Proof of theorem \ref{thm1.1}}
We split the proof in a series of  lemmas.

\begin{lemma}\label{lm3.1}
The  $M_n\left(\mathscr{A}_{V(k)}\right)$ is  
a simple $C^*$-algebra for each $n\ge 1$. 
\end{lemma}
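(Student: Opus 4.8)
The plan is to reduce the simplicity of $M_n(\mathscr{A}_{V(k)})$ to the simplicity of $\mathscr{A}_{V(k)}$ itself. Recall the general fact from operator algebra theory that a $C^*$-algebra $\mathscr{A}$ is simple if and only if the matrix algebra $M_n(\mathscr{A})$ is simple; indeed, $M_n(\mathscr{A})$ is Morita equivalent to $\mathscr{A}$ (via $M_n(\mathscr{A})\otimes\mathbf{K}\cong\mathscr{A}\otimes\mathbf{K}$), and Morita equivalence induces a lattice isomorphism between the closed two-sided ideals of the two algebras. Concretely, if $J\subseteq M_n(\mathscr{A}_{V(k)})$ is a nonzero closed two-sided ideal, then the set $I$ of $(1,1)$-entries of elements of $J$ (equivalently, $e_{11}Je_{11}$ viewed inside $e_{11}M_n(\mathscr{A}_{V(k)})e_{11}\cong\mathscr{A}_{V(k)}$) is a nonzero closed two-sided ideal of $\mathscr{A}_{V(k)}$, and conversely $J=M_n(I)$; hence it suffices to establish the case $n=1$.

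Thus the crux is to show that the Serre $C^*$-algebra $\mathscr{A}_{V(k)}$ is simple. The natural route is through its K-theory and crossed-product structure, as hinted by the parenthetical ``It follows from the K-theory of $\mathscr{A}_{V(k)}$''. By construction (Section 2.3) $\mathscr{A}_{V(k)}$ is the norm closure of a representation of the twisted homogeneous coordinate ring $B(V,\mathcal{L},\sigma)$, and for the varieties under consideration this algebra is known to be (Morita equivalent to) a crossed product of a commutative $C^*$-algebra by an action of $\mathbf{Z}$ generated by the automorphism $\sigma$; the model cases are the noncommutative tori $\mathscr{A}_{\Theta_{2g}}$ of Example \ref{exm2.5}. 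I would invoke the standard criterion that a crossed product $C(X)\rtimes_\sigma\mathbf{Z}$ is simple whenever the homeomorphism $\sigma$ is minimal (every orbit is dense), and observe that the automorphism $\sigma$ entering the Serre $C^*$-algebra is of this type — for abelian varieties it is translation by a point generating a dense subgroup, giving a minimal action. One then concludes that $\mathscr{A}_{V(k)}$ is simple, and combining with the reduction above, so is $M_n(\mathscr{A}_{V(k)})$ for every $n\ge 1$.

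I expect the main obstacle to be the second step rather than the first: pinning down in exactly what generality ``the'' Serre $C^*$-algebra is simple, since the definition in Section 2.3 depends on a choice of self-adjoint representation $\rho$ of $B(V,\mathcal{L},\sigma)$, and a priori different representations could have different closures. The cleanest fix is to appeal to the known structure theory — that for the varieties considered here $\mathscr{A}_{V(k)}$ is, up to Morita equivalence, a noncommutative torus or more generally a simple crossed product, a fact recorded in \cite[Section 5.3.1]{N} — and to cite the computation of $K_0$ (Example \ref{exm2.4}) as evidence that the algebra has no nontrivial ideals (an ideal would split off a direct summand visible in K-theory). If one prefers to stay self-contained, the first-step reduction is entirely elementary and can be written out in full, leaving only the citation of simplicity of $\mathscr{A}_{V(k)}$ to the literature.
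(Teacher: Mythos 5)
Your reduction of the simplicity of $M_n(\mathscr{A}_{V(k)})$ to that of $\mathscr{A}_{V(k)}$ is exactly the paper's item (ii): the lattice of closed two-sided ideals of $M_n(\mathscr{A})$ is in bijection with that of $\mathscr{A}$, so the matrix algebra is simple if and only if the base algebra is. Where you diverge is in the proof that $\mathscr{A}_{V(k)}$ itself is simple. You propose the dynamical criterion: $\mathscr{A}_V$ is a crossed product (the paper records this too, citing \cite[Theorem 5.3.4]{N}), and $C(X)\rtimes_\sigma\mathbf{Z}$ is simple when $\sigma$ is minimal. The paper instead embeds $\mathscr{A}_V$ into a stationary AF-algebra $\mathbb{A}$ with $K_0(\mathscr{A}_V)\cong K_0(\mathbb{A})$ and argues that the dimension group $(K_0(\mathbb{A}),K_0^+(\mathbb{A}))$ has no order ideals, hence $\mathbb{A}$ is simple, and concludes the same for $\mathscr{A}_{V(k)}$. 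The trade-off: your criterion is standard and self-contained, but, as you yourself flag, minimality of $\sigma$ is only evident for special classes (abelian varieties where $\sigma$ is a translation with dense orbit); the lemma is asserted for \emph{arbitrary} projective varieties $V(k)$, and a general automorphism $\sigma$ may have fixed points or finite orbits, in which case minimality fails and your argument does not cover the stated generality. The paper's route claims full generality by citing the structure theory of \cite[Sections 3.5, 5.3]{N}, at the cost of resting on the unargued step that matching ordered $K_0$-groups with a simple AF-algebra forces simplicity of $\mathscr{A}_{V(k)}$ itself.

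One concrete correction to your last paragraph: the suggestion that a nontrivial ideal ``would split off a direct summand visible in K-theory'' is not a valid substitute for a simplicity proof. Simplicity is not a $K$-theoretic invariant --- a closed two-sided ideal need not be a direct summand, and $K_0$ does not detect all ideals (non-simple algebras can have the same $K_0$ as simple ones). That sentence should be deleted and replaced either by a verification of minimality in the cases you actually need, or by a citation to a simplicity theorem for the specific crossed products arising here.
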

\begin{proof}
(i) Let us prove  that the $\mathscr{A}_{V(k)}$ is a simple $C^*$-algebra.  
Indeed,  any Serre $C^*$-algebra $\mathscr{A}_{V}$ is a crossed product $C^*$-algebra \cite[Theorem 5.3.4]{N}. 
Denote by $\mathbb{A}$ an Approximately Finite-dimensional (AF-) $C^*$-algebra \cite[Section 3.5]{N},
such that   $\mathscr{A}_{V}\hookrightarrow \mathbb{A}$ is an embedding  and  $K_0(\mathscr{A}_{V})\cong K_0 (\mathbb{A})$. 
In particular, the $\mathscr{A}_{V(k)}$ embeds into an AF-algebra $\mathbb{A}$  of stationary type \cite[Section 3.5.2]{N}. 
Such AF-algebras are always simple,  since the corresponding dimension group $(K_0 (\mathbb{A}), K_0^+ (\mathbb{A}))$ 
\cite[Definition 3.5.2]{N} does not have order-ideals.  Since $K_0^+(\mathscr{A}_{V(k)})\cong K_0^+ (\mathbb{A})$,
we conclude that the Serre $C^*$-algebra  $\mathscr{A}_{V(k)}$ is  simple.

\medskip
(ii)  Let us prove  that the  $M_n\left(\mathscr{A}_{V(k)}\right)$ is a simple $C^*$-algebra.
Recall that there exists a one-to-one correspondence between the two-sided ideals of the 
matrix algebra $M_n\left(\mathscr{A}_{V(k)}\right)$ and the two-sided of  $\mathscr{A}_{V(k)}$. 
In particular, the  $M_n\left(\mathscr{A}_{V(k)}\right)$ is simple if and only if the  $\mathscr{A}_{V(k)}$
is simple. It follows from item (i), that the matrix $C^*$-algebra $M_n\left(\mathscr{A}_{V(k)}\right)$
is simple for each $n\ge 1$.  Lemma \ref{lm3.1} is proved. 
\end{proof}

\begin{lemma}\label{lm3.2}
{\bf (Zariski's Lemma for the algebra $\mathscr{A}_V$)}
 If  $K$ is an extension of the number field $k$ obtained  by adjoining 
 the roots of all  polynomials of degree $n$ over $k$,  then
 \begin{equation}
 \mathscr{A}_{V(K)}\cong M_n\left(\mathscr{A}_{V(k)}\right). 
 \end{equation}
\end{lemma}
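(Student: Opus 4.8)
The plan is to build the isomorphism $\mathscr{A}_{V(K)}\cong M_n(\mathscr{A}_{V(k)})$ out of the analogous statement for the underlying twisted homogeneous coordinate rings and then pass to the norm closures. First I would recall that $K/k$ has degree $n$ (it is the splitting field of the generic degree-$n$ polynomial, and one knows $[K:k]=n!$ in general; but the relevant point for the construction is that $K$ is a finite-dimensional $k$-vector space and that $V(K)=V(k)\times_{\mathrm{Spec}\,k}\mathrm{Spec}\,K$ is base change). The key algebraic input is a Zariski-type identity at the level of coordinate rings: for a finite extension $K/k$ of degree $n$ one has $B(V(K),\mathcal{L}_K,\sigma_K)\cong M_n\big(B(V(k),\mathcal{L}_k,\sigma_k)\big)$, which reflects the fact that base change along $K/k$, viewed through the regular representation of $K$ on $k^n$, turns $K$-linear operators into $n\times n$ matrices of $k$-linear ones. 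I would cite (or quickly reprove) this from the structure of the twisted homogeneous coordinate ring under base change, noting that the automorphism $\sigma$ and the invertible sheaf $\mathcal{L}$ pull back compatibly.

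Next I would choose the self-adjoint representation $\rho_K: B(V(K),\mathcal{L}_K,\sigma_K)\to\mathbf{B}(\mathscr{H})$ defining $\mathscr{A}_{V(K)}$ to be the one induced from the representation $\rho_k$ defining $\mathscr{A}_{V(k)}$ via the identification above, i.e. take $\mathscr{H}=\mathscr{H}_k\otimes\mathbf{C}^n$ (equivalently $\mathscr{H}_k^{\oplus n}$) and let $\rho_K$ act as $\rho_k\otimes\mathrm{id}_{M_n}$ through the matrix picture. Since the Serre $C^*$-algebra is, by \cite[Theorem 5.3.4]{N}, a crossed product and hence independent of the particular faithful self-adjoint representation up to isomorphism, this choice is harmless. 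Then $\mathscr{A}_{V(K)}$ is by definition the norm closure of $\rho_K(B(V(K),\mathcal{L}_K,\sigma_K))=M_n(\rho_k(B(V(k),\mathcal{L}_k,\sigma_k)))$ inside $\mathbf{B}(\mathscr{H}_k\otimes\mathbf{C}^n)=M_n(\mathbf{B}(\mathscr{H}_k))$. Because the operator-norm closure of $M_n(S)$ for a $*$-subalgebra $S\subseteq\mathbf{B}(\mathscr{H}_k)$ equals $M_n(\overline{S})$ — the norm on $M_n$ of a $C^*$-algebra being dominated by and dominating the entrywise norm up to the factor $n$ — we get $\overline{M_n(\rho_k(B))}=M_n(\overline{\rho_k(B)})=M_n(\mathscr{A}_{V(k)})$, which is the desired isomorphism.

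The main obstacle, and the step that deserves the most care, is establishing the coordinate-ring identity $B(V(K),\mathcal{L}_K,\sigma_K)\cong M_n(B(V(k),\mathcal{L}_k,\sigma_k))$ and checking that it is compatible with the grading, the involution, and the self-adjoint representations, so that closures match up; in particular one must verify that base change sends a self-adjoint representation to a self-adjoint representation and that the twisting data $(\mathcal{L},\sigma)$ behave as claimed under $\mathrm{Spec}\,K\to\mathrm{Spec}\,k$. A secondary subtlety is the precise meaning of $V(K)$ as a ``projective variety over $K$'' versus the Weil restriction or base change of $V(k)$; I would fix the convention that $V(K)$ denotes $V(k)\otimes_k K$ so that the comparison with $M_n$ is the operator-theoretic shadow of the isomorphism $\mathrm{End}_K(K)\cong\mathrm{End}_k(K)$ after choosing a $k$-basis of $K$. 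Once the coordinate-ring level is settled, the passage to $C^*$-algebras is routine functional analysis as sketched above.
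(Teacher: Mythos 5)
There is a genuine gap, and it sits at the very first step. The field $K$ in the statement is obtained by adjoining to $k$ the roots of \emph{all} polynomials of degree $n$ over $k$; for $n\ge 2$ this is an \emph{infinite} extension of $k$ (already for $k=\mathbf{Q}$, $n=2$ it contains $\sqrt{d}$ for every integer $d$). Your premise that $[K:k]=n$ (or $n!$, as the splitting field of a single generic polynomial) misreads the hypothesis, and with it the entire mechanism of your argument collapses: there is no regular representation of $K$ on $k^n$, no identification $\mathrm{End}_k(K)\cong M_n(k)$, and no way to view base change along $K/k$ as "tensoring with $M_n$". Even setting this aside, the coordinate-ring identity you rely on would not follow from base change for an honest degree-$n$ extension: the regular representation gives an embedding $B\otimes_k K\hookrightarrow M_n(B)$, but $B\otimes_k K$ is much smaller than $M_n(B)$ (each graded piece has $k$-dimension $n\dim B_m$ versus $n^2\dim B_m$), so $B(V(K),\mathcal{L}_K,\sigma_K)\cong M_n(B(V(k),\mathcal{L}_k,\sigma_k))$ cannot be "the operator-theoretic shadow of $\mathrm{End}_K(K)\cong\mathrm{End}_k(K)$". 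The one part of your write-up that is sound is the final functional-analytic step: the operator-norm closure of $M_n(S)$ is $M_n(\overline{S})$, and this is indeed how the paper passes from rings to $C^*$-algebras.

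For comparison, the paper argues in the opposite direction: it starts from $M_n(B)$ with $B=B(V(k),\mathcal{L},\sigma)$ and computes its \emph{field of constants} by observing that for every monic polynomial $x^n+c_{n-1}x^{n-1}+\dots+c_0$ with $c_i\in k$ the Frobenius companion matrix lies in $M_n(B)$, so every algebraic number of degree $n$ over $k$ occurs as a "constant" of $M_n(B)$; running over all such polynomials yields exactly the field $K$ of the statement. It then invokes the principle that the field of constants of a twisted homogeneous coordinate ring is the field of definition of the variety to conclude $M_n(B)\cong B(V(K),\mathcal{L},\sigma)$, and only then takes norm closures. Whatever one thinks of the rigor of that computation, it is designed precisely to accommodate the fact that $K/k$ is generated by \emph{all} degree-$n$ algebraic numbers, which is the feature your proposal overlooks.
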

\begin{proof}
(i) 
Recall that the $\mathscr{A}_{V(k)}$  is the norm-closure of
a self-adjoint representation of   the $k$-algebra 
$B:=B(V(k), \mathcal{L}, \sigma)$ of  non-commutative polynomials 
 over the number field $k$,  see   Example \ref{exm2.2} and formula (\ref{eq2.4}).  Consider 
 the Frobenius companion matrix given by an element 
\begin{equation}\label{eq3.2}
\mathscr{C}=
\left(
\begin{matrix}
0 & 0 & \dots & 0 &-c_0\cr
1 & 0 & \dots &  0 &-c_1\cr
\vdots &\vdots & & \vdots& \vdots\cr
0 & 0 & \dots & 1 & -c_{n-1}
\end{matrix}
\right), 
\qquad c_i\in k
\end{equation}
of the $K$-algebra $M_n (B)$. 
To determine the field of constants $K$ of the algebra  $M_n (B)$,
denote by  $I$ the identity of  $M_n (B)$
and consider the set $\{c I ~|~c\in K\}$.  It is easy to see, that 
\begin{equation}\label{eq3.3}
c I=\mathscr{C}\in   M_n (B)
\end{equation}
 if an only if 
$c$ is a root of the characteristic polynomial of matrix $\mathscr{C}$
given by (\ref{eq3.2}), i.e. 
\begin{equation}\label{eq3.4}
Char ~\mathscr{C}=x^n+c_{n-1}x^{n-1}+\dots+c_0.
\end{equation}

We conclude from (\ref{eq3.3})  that the field of constants $K$ of the algebra $M_n (B)$ 
must contain all roots of the  polynomial (\ref{eq3.4}). Since $c_i\in k$, these  roots are algebraic numbers of degree $n$ over $k$.

One can run through the set of all matrices $\mathscr{C}$ given by formula (\ref{eq3.2}) with $c_0\ne 0$. 
It is clear that in this way we get all algebraic extensions of degree $n$ over the number field $k$. 
Thus the field of constants $K$ of the algebra   $M_n (B)$
is obtained from $k$ by adjoining  all algebraic numbers of degree $n$ over $k$.

\bigskip
(ii) It follows from item (i) that the  $M_n (B)$ is an algebra of non-commutative polynomials
over the field $K$.  Since the field of constants of the twisted homogeneous coordinate ring
coincides with the field of definition of projective variety (see Example \ref{exm2.2}),
we conclude that 
\begin{equation}\label{eq3.5}
M_n(B)\cong B(V(K), ~\mathcal{L}, ~\sigma). 
\end{equation}

\bigskip
(iii) It remains to compare (\ref{eq3.5}) with the notation $B\cong B(V(k), ~\mathcal{L}, ~\sigma)$ 
and  to obtain a ring  isomorphism:
\begin{equation}\label{eq3.6}
B(V(K), ~\mathcal{L}, ~\sigma)\cong 
M_n(B(V(k), ~\mathcal{L}, ~\sigma)). 
\end{equation}
Taking the norm-closure of a self-adjoint representation (\ref{eq2.4}) 
 of the rings  in the both sides of (\ref{eq3.6}),  one gets a $C^*$-algebra 
 isomorphism: 
\begin{equation}\label{eq3.7}
\mathscr{A}_{V(K)}\cong M_n\left(\mathscr{A}_{V(k)}\right).
\end{equation}

Lemma \ref{lm3.2} is proved. 
\end{proof}

\begin{lemma}\label{lm3.3}
 The varieties $V(k)$ and $V'(k)$ are $K$-isomorphic
if and only if 
\begin{equation}
 M_n\left(\mathscr{A}_{V(k)}\right)\cong  M_n\left(\mathscr{A}_{V'(k)}\right).
\end{equation}
\end{lemma}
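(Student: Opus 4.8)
The plan is to reduce Lemma \ref{lm3.3} to the known classification over $\mathbf{C}$ (\cite[Theorem 5.3.3]{N}) by passing through the field $K$ and using the Zariski-type Lemma \ref{lm3.2}. First I would observe that, by Lemma \ref{lm3.2} applied to both varieties, the matrix isomorphism $M_n(\mathscr{A}_{V(k)})\cong M_n(\mathscr{A}_{V'(k)})$ is equivalent to the $C^*$-algebra isomorphism $\mathscr{A}_{V(K)}\cong \mathscr{A}_{V'(K)}$, where $K$ is the extension of $k$ obtained by adjoining the roots of all degree-$n$ polynomials over $k$. Thus the statement to prove becomes: $V(k)$ and $V'(k)$ are $K$-isomorphic if and only if $\mathscr{A}_{V(K)}\cong\mathscr{A}_{V'(K)}$.

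Next I would handle the two directions. For the forward direction, a $K$-isomorphism $\varphi\colon V(k)\to V'(k)$ is in particular an isomorphism of the base-changed varieties $V(K)\cong V'(K)$ over $K$, hence by functoriality of the twisted homogeneous coordinate ring construction it induces an isomorphism $B(V(K),\mathcal{L},\sigma)\cong B(V'(K),\mathcal{L}',\sigma')$ of $K$-algebras; taking norm-closures of the self-adjoint representations (\ref{eq2.4}) as in step (iii) of Lemma \ref{lm3.2} yields $\mathscr{A}_{V(K)}\cong\mathscr{A}_{V'(K)}$. For the converse, I would invoke \cite[Theorem 5.3.3]{N}: an isomorphism (indeed even a Morita equivalence) $\mathscr{A}_{V(K)}\cong\mathscr{A}_{V'(K)}$ forces the underlying complex projective varieties to be $\mathbf{C}$-isomorphic, and then I must upgrade this to a $K$-isomorphism. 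The key point is that the isomorphism of Serre $C^*$-algebras is an isomorphism of the norm-closures of the $K$-algebras $B(V(K),\mathcal{L},\sigma)$ and $B(V'(K),\mathcal{L}',\sigma')$; since the field of constants $K$ is recovered intrinsically from the algebra (as the center's field of scalars, cf.\ the argument in step (i) of Lemma \ref{lm3.2}), the induced isomorphism is $K$-linear, and by (\ref{eq2.1}) descends to an isomorphism of the categories $\mathrm{Coh}$ of the two varieties over $K$, hence to a $K$-isomorphism $V(k)\cong V'(k)$.

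The main obstacle I anticipate is the converse direction, specifically the descent of the abstract $C^*$-algebra isomorphism to a genuinely $K$-rational isomorphism of varieties rather than merely a $\mathbf{C}$-isomorphism. The subtlety is that a priori a $C^*$-algebra isomorphism need not respect the $K$-structure; one must argue that the field of definition $K$ is an invariant of $M_n(\mathscr{A}_{V(k)})$ — equivalently of $\mathscr{A}_{V(K)}$ — so that any isomorphism automatically carries the distinguished copy of $K$ inside one algebra to that of the other. This is exactly where Lemma \ref{lm3.2}'s identification of $K$ as the field of constants is used: because the field of constants is determined by the algebra (it is the maximal subfield $\{cI\}$ over which the polynomial relations are defined), the isomorphism is forced to be an isomorphism of $K$-algebras, and then functoriality of (\ref{eq2.1}) closes the argument.

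To summarize the order of steps: (1) use Lemma \ref{lm3.2} to replace $M_n(\mathscr{A}_{V(k)})\cong M_n(\mathscr{A}_{V'(k)})$ by $\mathscr{A}_{V(K)}\cong\mathscr{A}_{V'(K)}$; (2) prove the forward implication by functoriality of $B(V,\mathcal{L},\sigma)$ under base change and taking norm-closures; (3) prove the converse by applying \cite[Theorem 5.3.3]{N} to get a $\mathbf{C}$-isomorphism and then using the intrinsic recovery of the field of constants $K$ (Lemma \ref{lm3.2}(i)) together with the equivalence (\ref{eq2.1}) to promote it to a $K$-isomorphism $V(k)\cong V'(k)$. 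This completes the proof of Lemma \ref{lm3.3}, and combined with Lemma \ref{lm3.1} and Lemma \ref{lm3.2} it yields Theorem \ref{thm1.1}.
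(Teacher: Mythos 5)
Your proof follows essentially the same route as the paper's: both reduce the matrix-algebra isomorphism to $\mathscr{A}_{V(K)}\cong\mathscr{A}_{V'(K)}$ via Lemma \ref{lm3.2} and then pass back and forth between $K$-isomorphisms of the varieties and isomorphisms of their Serre $C^*$-algebras (the paper via the commutative diagram in Figure 1, you via functoriality of $B(V,\mathcal{L},\sigma)$ and \cite[Theorem 5.3.3]{N}). If anything, your treatment of the converse direction --- explicitly flagging why an abstract $C^*$-algebra isomorphism must respect the $K$-structure and resolving this through the field-of-constants argument of Lemma \ref{lm3.2}(i) --- is more detailed than the paper's, which simply posits the $K$-isomorphism $\psi$ without justification.
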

\begin{proof}
(i) Let $V(k)\cong V'(k)$ be a pair of isomorphic projective varieties defined over the
number field $k$. The canonical embedding 
\begin{equation}
\mathscr{A}_{V(k)}\hookrightarrow M_n\left(\mathscr{A}_{V(k)}\right)\cong \mathscr{A}_{V(K)}
\end{equation}
gives rise to an embedding $i$ of the variety $V(k)$ into the variety $V(K)$. 
Consider a commutative diagram in Figure 1, where $\psi$ is an isomorphism defined over 
the number field $K$. 
Since $\varphi=i\circ\psi\circ  i^{-1}$ is a composition of  maps  defined over the number field  $K$,  we conclude that the isomorphism 
$\varphi: V(k)\to V'(k)$ must also be  defined over the  field $K$.  
\begin{figure}
\begin{picture}(300,110)(-60,-5)
\put(30,70){\vector(0,-1){35}}
\put(140,70){\vector(0,-1){35}}
\put(55,23){\vector(1,0){60}}
\put(55,83){\vector(1,0){60}}
\put(15,20){$V'(k)$}
\put(128,20){$V'(K)$}
\put(17,80){$V(k)$}
\put(125,80){$V(K)$}
\put(80,30){$i$}
\put(80,90){$i$}
\put(10,50){$\varphi$}
\put(125,50){$\psi$}
\end{picture}
\caption{}
\end{figure}

\bigskip
(ii) Let $\psi$ be the $K$-isomorphism between the varieties $V(K)$ and $V'(K)$ shown on the diagram in 
Figure 1.  The $\psi $ gives rise to an isomorphism  $\mathscr{A}_{V(K)}\cong \mathscr{A}_{V'(K)}$
of the corresponding Serre $C^*$-algebras. 

In view of lemma \ref{lm3.2},  one gets the isomorphisms $\mathscr{A}_{V(K)}\cong M_n\left(\mathscr{A}_{V(k)}\right)$ and 
$\mathscr{A}_{V'(K)}\cong M_n\left(\mathscr{A}_{V'(k)}\right)$. 
Comparing the latter with the results of item (i), we conclude that 
the varieties $V(k)\cong V'(k)$ are $K$-isomorphic
if and only if
\begin{equation}
 M_n\left(\mathscr{A}_{V(k)}\right)\cong  M_n\left(\mathscr{A}_{V'(k)}\right).
\end{equation}

Lemma \ref{lm3.3} is proved. 
\end{proof}

\bigskip
Theorem \ref{thm1.1} follows from lemma \ref{lm3.3}.

\subsection{Proof of corollary \ref{cor1.3}}
(i) Let us prove that
the projective variety $V(k)$ is  $k$-isomorphic  to a projective variety  $V'(k)$
if and only if the Serre $C^*$-algebra  $\mathscr{A}_{V(k)}$ is  isomorphic  to the Serre $C^*$-algebra
  $\mathscr{A}_{V'(k)}$.  Indeed, suppose that in theorem \ref{thm1.1} we have  $n=1$. 
  In this case the field $K$ is an algebraic extension of the field $k$ of degree $1$.
  In other words, the fields $k\cong K$ are isomorphic. 
  
  On the other hand, theorem \ref{thm1.1} says that the variety $V(k)$ is $k$-isomorphic  to the variety 
  $V'(k)$ if and only if    $\mathscr{A}_{V(k)}\cong \mathscr{A}_{V'(k)}$ by formula 
  (\ref{eq1.1}) with $n=1$.  Item (i) is proved.

\bigskip
(ii) Let us prove that
the projective variety $V(k)$ is  $\mathbf{C}$-isomorphic to a projective variety  $V'(k)$
if and only if the Serre $C^*$-algebra  $\mathscr{A}_{V(k)}$ is  Morita equivalent  to the Serre $C^*$-algebra
  $\mathscr{A}_{V'(k)}$. 

We shall denote by $\bar k$ the algebraic closure of the number field $k$ obtained by adjoining the roots of
all polynomials of finite degree over $k$.  It is easy to see, that  this case of theorem \ref{thm1.1} corresponds to $n=\infty$ and 
$K\cong \bar k$.

Recall that the $C^*$-algebra $M_n\left(\mathscr{A}_{V(k)}\right)$ can be written in the form of a tensor product, i.e.
\begin{equation}\label{eq3.11}
M_n\left(\mathscr{A}_{V(k)}\right)\cong \mathscr{A}_{V(k)}\otimes M_n(\mathbf{C}). 
\end{equation}
  
 On the other hand, the canonical inclusions of the $C^*$-algebras $\mathbf{C}\subset M_2(\mathbf{C})\subset M_3(\mathbf{C})\subset\dots$
 are known to converge to an inductive limit: 
\begin{equation}
\lim_{n\to\infty} M_n(\mathbf{C})\cong \mathbf{K}, 
\end{equation}
where $\mathbf{K}$ is the $C^*$-algebra of all compact operators on a Hilbert space. 
Thus in view of (\ref{eq3.11}), the isomorphism (\ref{eq1.1}) for $n=\infty$ can be written in the form:
\begin{equation}\label{eq3.13}
\mathscr{A}_{V(k)}\otimes\mathbf{K} \cong 
\mathscr{A}_{V'(k)}\otimes\mathbf{K}.  
\end{equation}
By definition, the isomorphism (\ref{eq3.13}) says that the Serre $C^*$-algebras   $\mathscr{A}_{V(k)}$
and  $\mathscr{A}_{V'(k)}$  are Morita equivalent.

\medskip
It remains to apply the Lefschetz Principle to the number field $\bar k$.  Namely, one can pass from the algebraically
closed field $\bar k$  of characteristic zero to the field of complex numbers $\mathbf{C}$.   
This remark finishes the proof of item (ii).

\bigskip
(iii) Corollary \ref{cor1.3} follows from items (i) and (ii). 

\section{Faltings Theorem}
In this section we consider an application of the corollary \ref{cor1.3} 
to  an alternative proof of the Faltings Theorem about finiteness of the  
rational points  on the higher genus curves  [Hindry \& Silverman 2000]  \cite[Theorem E.0.1]{HS}. 
Namely, we show that corollary \ref{cor1.3} and a classification of the even-dimensional 
noncommutative tori up to an isomorphism (lemma \ref{lm4.2})  imply  the Shafarevich Conjecture for the abelian varieties [Par\v{s}in 1970] \cite[Conjecture S1]{Par1}. 
The Faltings Theorem is known to follow from the Shafarevich Conjecture using the  Parshin's Trick.  Let us pass to a detailed argument. 

\begin{theorem}\label{thm4.1}
{\bf (Shafarevich Conjecture)}
There exists only a finite number of the abelian varieties $\mathbf{A}_g$ over a number field $k$, such that:

\medskip
(i)   $\mathbf{A}_g$ have a  polarization of  degree $d\ge 1$;

\smallskip
(ii) $\mathbf{A}_g$ have a good reduction outside a fixed finite set $S$ of places of the field $k$. 
\end{theorem}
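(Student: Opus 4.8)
The plan is to deduce the Shafarevich Conjecture from Corollary \ref{cor1.3} by translating the finiteness statement about abelian varieties into a finiteness statement about the associated Serre $C^*$-algebras, which in this case are even-dimensional noncommutative tori. Recall from Example \ref{exm2.5} that a $g$-dimensional abelian variety $\mathbf{A}_g$ has $\mathscr{A}_{\mathbf{A}_g}\cong\mathscr{A}_{\Theta_{2g}}$, so by Corollary \ref{cor1.3} the $k$-isomorphism classes of such $\mathbf{A}_g$ are in bijection with the isomorphism classes of the noncommutative tori $\mathscr{A}_{\Theta_{2g}}$ that arise this way. Hence it suffices to show that only finitely many isomorphism classes of $2g$-dimensional noncommutative tori occur among those $\mathbf{A}_g$ satisfying conditions (i) and (ii).

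First I would invoke a classification of even-dimensional noncommutative tori up to isomorphism — the lemma \ref{lm4.2} referenced in the text — which should express the isomorphism class of $\mathscr{A}_{\Theta_{2g}}$ in terms of arithmetic data attached to the matrix $\Theta_{2g}$ (for instance, an equivalence class under the action of $GL_{2g}(\mathbf{Z})$, or equivalently an order in a suitable algebra together with a module class over it). Second I would show that the polarization hypothesis (i) and the good-reduction hypothesis (ii) constrain this arithmetic data to lie in a finite set: the degree-$d$ polarization pins down a Riemann form, hence bounds the relevant discriminant or the size of the lattice data; the good reduction outside $S$ controls the primes dividing the conductor-type invariant of the associated order, so only finitely many orders can appear; and for each such order there are only finitely many module classes. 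Third, assembling these two finiteness inputs, I conclude that only finitely many isomorphism classes of $\mathscr{A}_{\Theta_{2g}}$ occur, and then pull this back through Corollary \ref{cor1.3} to obtain the finiteness of the $k$-isomorphism classes of $\mathbf{A}_g$.

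The main obstacle I expect is the second step: making precise how the classical Diophantine hypotheses (i) and (ii) translate into \emph{effective} finiteness on the noncommutative-torus side. In particular one must verify that having good reduction outside $S$ really does bound the arithmetic invariant classifying $\mathscr{A}_{\Theta_{2g}}$ — this is the crux, since a priori the passage from $\mathbf{A}_g$ to its Serre $C^*$-algebra could lose the reduction information, and one needs that $\mathscr{A}_{\Theta_{2g}}$ remembers enough of the integral structure (the Néron model data) to detect the set of bad primes. If lemma \ref{lm4.2} is stated so that the isomorphism class of a $2g$-torus corresponds to a class of orders in a fixed algebra, then the argument reduces to the classical finiteness of orders of bounded conductor together with finiteness of ideal classes, and the Parshin Trick quoted in the text then upgrades the Shafarevich statement to the Faltings finiteness of rational points on curves of genus $\ge 2$.
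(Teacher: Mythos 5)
Your plan and the paper's proof diverge at the key lemma, and your version has a gap that you yourself flag but do not close. You assume that Lemma \ref{lm4.2} is a \emph{classification} of even-dimensional noncommutative tori by arithmetic data (a $GL_{2g}(\mathbf{Z})$-orbit, an order in an algebra, a module class), and you then plan to bound that data using the polarization degree and the good-reduction hypothesis. No such classification is available: the paper itself points out (Remark after Example \ref{exm4.3}, citing Rieffel's Question 4.1) that describing which matrices $\Theta_{2g}$ give isomorphic tori is an open problem for $g\ge 2$. What the paper actually proves in Lemma \ref{lm4.2} is much weaker and of a completely different flavor: for a \emph{fixed} isomorphism class, only finitely many matrices $\Theta_{2g}$ can occur, shown by a soft analyticity/compactness argument (the function $\rho\mapsto \|x\|$ is real analytic on $\mathbf{T}^{g(2g-1)}$; an infinite set of parameters giving isomorphic tori would force this function to be constant on an interval, contradicting the existence of a continuum of non-isomorphic tori there). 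Your second step --- that good reduction outside $S$ bounds ``the conductor-type invariant of the associated order'' --- is exactly the crux you identify, and it is not something Corollary \ref{cor1.3} or the construction of $\mathscr{A}_{V(k)}$ can deliver: the Serre $C^*$-algebra is built from $(V,\mathcal{L},\sigma)$ over $k$ and carries no reference to integral models or N\'eron data, so there is no mechanism in the paper by which $\mathscr{A}_{\Theta_{2g}}$ ``remembers'' the set of bad primes. As written, your proposal is a program whose essential middle step is both unproved and unsupported by the tools the paper provides.

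For comparison, the paper's route in Lemma \ref{lm4.5} does not attempt to bound isomorphism classes of tori by arithmetic invariants at all. It argues: a $k$-isomorphism $\varphi:\mathbf{A}_g\to\mathbf{A}_g'$ induces (via Corollary \ref{cor1.3} and Example \ref{exm2.5}) an isomorphism $\mathscr{A}_{\Theta_{2g}}\cong\mathscr{A}_{\Theta_{2g}'}$ and preserves the data $(d,S)$; an infinite family of such varieties would produce infinitely many distinct matrices $\Theta_{2g}$ with pairwise isomorphic tori, contradicting Lemma \ref{lm4.2}. Note that this argument only constrains varieties that are already $k$-isomorphic to one another, whereas the Shafarevich statement concerns the number of $k$-isomorphism \emph{classes}; the hypotheses (i) and (ii) enter the paper's proof only through the remark that $\varphi$ preserves $(d,S)$. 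So the two approaches are genuinely different decompositions of the problem, and neither coincides with the other: yours founders on a nonexistent classification and an untransferred reduction hypothesis, while the paper's relies on the analyticity lemma \ref{lm4.2} and a counting step of a different logical shape.
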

\begin{proof}
We split the proof in two lemmas having an independent interest.

\begin{lemma}\label{lm4.2} 
There exists only a finite number of pairwise distinct skew-symmetric matrices $\Theta_{2g}$ given by the formula (\ref{skew}),
such that the corresponding noncommutative tori $\mathscr{A}_{\Theta_{2g}}$ are isomorphic to each other. 
\end{lemma}
\begin{proof}
(i) Denote by $\rho_{ij}=\exp~(2\pi i\theta_{ij})$, see Example \ref{exm2.3}. 
Then the commutation relations for the noncommutative torus 
$\mathscr{A}_{\Theta_{2g}}$ can be written in the form:
\begin{equation}
U_iU_jU_i^{-1}U_j^{-1}=\rho_{ij}\in\mathbf{T},
\end{equation}
where $\mathbf{T}$ is the unit circle.  Since the $n$-dimensional 
noncommutative torus depends on ${n(n-1)\over 2}$ real parameters 
$\theta_{ij}$, one can identify the $\mathscr{A}_{\Theta_{2g}}$ with the
family of noncommutative tori
\begin{equation}\label{eq4.2}
\left\{\mathscr{A}_{\rho} ~|~ \rho\in\mathbf{T}^{g(2g-1)}\right\}. 
\end{equation}

\bigskip
(ii) 
 For a dense subalgebra $\mathscr{A}_{\rho}^0\subset \mathscr{A}_{\rho}$ consisting of the polynomials 
 in the variables $\{U_k ~|~ 1\le k\le 2g\}$ and an element $x\in \mathscr{A}_{\rho}^0$ 
 consider a function given by the formula
\begin{equation}\label{eq4.3}
\rho\mapsto ||x||. 
\end{equation}
It is easy to see,  that (\ref{eq4.3}) is a (real) analytic function on $\mathbf{T}^{g(2g-1)}$.

\bigskip
(iii)  Lemma \ref{lm4.2} can be proved by contradiction. 
Let  $\{\rho_k\}\in \mathbf{T}^{g(2g-1)}$ be an infinite sequence,  such that the $\mathscr{A}_{\rho_k}$ in 
 (\ref{eq4.2})  are pairwise isomorphic noncommutative tori. 
Since $\mathbf{T}^{g(2g-1)}$ is a compact space, one can always restrict to a Cauchy subsequence
$\{\rho_k'\}$ convergent to a point $\rho\in  \mathbf{T}^{g(2g-1)}$. 
Since function (\ref{eq4.3}) is constant on $\rho_k'$ and analytic on an interval $[\rho_k',\rho]$,
we conclude that (\ref{eq4.3}) is constant everywhere on the interval  $[\rho_k',\rho]$. 
This is obviously false,  since the interval $[\rho_k',\rho]\subset  \mathbf{T}^{g(2g-1)}$
contains  a continuum of pairwise non-isomorphic noncommutative tori $\mathscr{A}_{\rho}$
 [Rieffel 1990] \cite{Rie1}.  This contradiction finishes the proof of lemma \ref{lm4.2}. 
\end{proof}

\bigskip
\begin{example}\label{exm4.3}
Let $g=1$ in lemma \ref{lm4.2}. Then matrix (\ref{skew}) can be written as:
\begin{equation}
\Theta=\left(
\begin{matrix}
0 & \theta\cr
-\theta & 0
\end{matrix}
\right),  \qquad \hbox{where} \quad \theta\in\mathbf{R}. 
\end{equation}
It is known, that the noncommutative tori $\mathscr{A}_{\theta}$ and $\mathscr{A}_{\theta'}$ 
are isomorphic if and only if $\theta'=\pm\theta\mod\mathbf{Z}$ [Rieffel 1990] \cite[p. 200]{Rie1}.
In other words, there are only two distinct values $\rho_1,\rho_2\in\mathbf{T}$, such that 
$\rho_{1,2}=\exp (\pm 2\pi i\theta)$ correspond to the isomorphic noncommutative tori.   
\end{example}
\begin{remark}
For $g\ge 2$ an explicit formula for the matrices $\Theta_{2g}$ corresponding to the pairwise 
isomorphic noncommutative tori is an open problem  [Rieffel 1990] \cite[Question 4.1]{Rie1}.
Lemma \ref{lm4.2} can be viewed as a partial answer to this problem. 
\end{remark}

\begin{lemma}\label {lm4.5}
The number of abelian varieties  $\mathbf{A}_g$ over a number field $k$ satisfying conditions 
(i) and (ii) of Theorem \ref{thm4.1} is at most finite.  
\end{lemma}
\begin{proof}
(i) Let $\mathbf{A}_g$ be a $g$-dimensional abelian variety defined over the number field $k$.
Consider a $k$-isomorphism $\varphi:  \mathbf{A}_g\to \mathbf{A}_g'$ from  $\mathbf{A}_g$ 
to an abelian variety  $\mathbf{A}_g'$.  The corollary \ref{cor1.3} says that the Serre $C^*$-algebras
$\mathscr{A}_{\mathbf{A}_g}\cong\mathscr{A}_{\Theta_{2g}}$ and  $\mathscr{A}_{\mathbf{A}_g'}\cong\mathscr{A}_{\Theta_{2g}'}$
 must be isomorphic  noncommutative tori, see    Example \ref{exm2.5}.  The corresponding commutative 
diagram is shown in Figure 2. 

\begin{figure}
\begin{picture}(300,110)(-60,-5)
\put(30,70){\vector(0,-1){35}}
\put(140,70){\vector(0,-1){35}}
\put(55,23){\vector(1,0){60}}
\put(55,83){\vector(1,0){60}}
\put(20,20){$\mathscr{A}_{\Theta_{2g}}$}
\put(128,20){$\mathscr{A}_{\Theta_{2g}'}$}
\put(25,80){$\mathbf{A}_g$}
\put(130,80){$\mathbf{A}_g'$}
\put(80,30){$\cong$}
\put(80,90){$\varphi$}
\end{picture}
\caption{}
\end{figure}

\bigskip
(ii) 
Let $(\mathbf{A}_g, d, S)$ be the abelian variety $\mathbf{A}_g$ over the field $k$ with a polarization 
of degree $d\ge 1$ and good reduction outside a finite set $S$ of places of $k$.  
Since $\varphi$ is a $k$-isomorphism of $\mathbf{A}_g$, it must preserve the degree $d$ 
of polarization and the set $S$ of places of a bad reduction of $\mathbf{A}_g$. 
In other words,
\begin{equation}\label{eq4.5}
\varphi(\mathbf{A}_g, d, S)=(\mathbf{A}_g', d, S). 
\end{equation}

\bigskip
(iii) Lemma \ref{lm4.2} implies  that there exists at most  finite number of isomorphic  
abelian varieties  $\mathbf{A}_g$ satisfying (\ref{eq4.5}).  Indeed, for otherwise 
one gets an infinite number of isomorphic noncommutative tori  $\mathscr{A}_{\Theta_{2g}}$,
see Figure 2.  This contradicts lemma \ref{lm4.2} and finishes the proof of  
lemma \ref{lm4.5}. 
\end{proof}

\bigskip
Theorem \ref{thm4.1} follows from lemma \ref{lm4.5}. 
\end{proof}

\begin{corollary}\label{cor4.6}
{\bf (Faltings Theorem)}
If $\mathscr{C}(k)$ is a curve of genus $g\ge 2$, then the set of its $k$-points
is finite.  
\end{corollary}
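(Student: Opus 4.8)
The plan is to derive the Faltings Theorem (Corollary \ref{cor4.6}) from Theorem \ref{thm4.1} (the Shafarevich Conjecture) by means of the classical Parshin construction, which is the standard reduction and which I would only sketch since it is not the novel content of this paper. First I would recall the statement of Parshin's Trick: given a curve $\mathscr{C}(k)$ of genus $g\ge 2$ over a number field $k$, and a hypothetical rational point $P\in\mathscr{C}(k)$, one constructs a covering $\mathscr{C}_P\to\mathscr{C}$ ramified only over $P$, of bounded degree, with $\mathscr{C}_P$ defined over a fixed finite extension $k'$ of $k$ (independent of $P$) and having good reduction outside a fixed finite set $S'$ of places of $k'$ (again independent of $P$, depending only on $\mathscr{C}$, $k$, and the ramification data). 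The genus of $\mathscr{C}_P$ is bounded by a constant depending only on $g$.

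Next I would pass to Jacobians: the assignment $P\mapsto \mathrm{Jac}(\mathscr{C}_P)$ produces, for each rational point $P$, a principally polarized abelian variety $\mathbf{A}_{g'}$ of dimension $g'=\mathrm{genus}(\mathscr{C}_P)$ bounded independently of $P$, defined over $k'$, with good reduction outside $S'$, and carrying a polarization of degree $d=1$. Thus all these Jacobians satisfy conditions (i) and (ii) of Theorem \ref{thm4.1} with a fixed triple $(d,S')=(1,S')$ and bounded $g'$; there are therefore only finitely many of them up to $k'$-isomorphism. A Torelli-type argument (finiteness of curves with a given Jacobian, together with finiteness of the covering data $\mathscr{C}_P\to\mathscr{C}$ of bounded degree) then shows that the set of points $P$ giving rise to a fixed Jacobian is finite, so that $\mathscr{C}(k)$ itself must be finite.

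The main obstacle — and the place where I would be most careful — is verifying uniformity in $P$: one must check that the field of definition $k'$, the set of bad places $S'$, and the genus of $\mathscr{C}_P$ can all be bounded independently of the chosen rational point $P$. This is precisely the content of Parshin's Trick and uses the Chevalley–Weil theorem (to control the field of definition of the covering from the ramification and reduction data) together with Hermite–Minkowski (finiteness of extensions of bounded degree unramified outside a fixed set). A secondary, easier obstacle is the Torelli step: one invokes that a principally polarized abelian variety is the Jacobian of at most finitely many (in fact essentially one, up to the obvious ambiguities) smooth curves, and that a curve admits only finitely many coverings of bounded degree with prescribed branch locus. Granting these classical inputs, the combination with Theorem \ref{thm4.1} yields the finiteness of $\mathscr{C}(k)$, completing the proof of Corollary \ref{cor4.6}.
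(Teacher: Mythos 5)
Your proposal is correct and follows essentially the same route as the paper: both deduce Corollary \ref{cor4.6} from Theorem \ref{thm4.1} by Parshin's trick, passing to branched coverings $\mathscr{C}_P$ attached to each rational point $P$, invoking the Shafarevich-type finiteness for their (principally polarized) Jacobians, and concluding via a Torelli/finite-to-one step. Your write-up is in fact more careful than the paper's about the two points it glosses over --- the uniformity in $P$ of the field of definition, bad-reduction set, and genus of $\mathscr{C}_P$, and the reason the assignment $P\mapsto\mathscr{C}_P$ is finite-to-one --- but these are refinements of the same classical argument, not a different approach.
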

\begin{proof}
The argument based on the Shafarevich Conjecture \ref{thm4.1} is well known.
For the sake of completeness, we repeat it in below. 

\bigskip
(i) One can always assume that there exists at least one $k$-point of $\mathscr{C}(k)$. 
The Jacobian $Jac~\mathscr{C}$ of $\mathscr{C}(k)$ is an  abelian 
variety over the field $k$.  Conversely,  given a principal polarization $d=1$ 
on  the Jacobian $Jac~\mathscr{C}$, one can recover a curve $\mathscr{C}(k)$ 
having the same set  $S$ as the abelian variety  $Jac~\mathscr{C}$. 
In view of Theorem \ref{thm4.1}, there exists only a finite number of 
pairs  $(\mathscr{C}(k), S)$ consisting of the isomorphic curves $\mathscr{C}(k)$
with a fixed set $S$. 

\bigskip
(ii) The rest of the proof is a Parshin's Trick described in  [Par\v{s}in 1970] \cite[Th\'eor\`eme 1]{Par1}. 
Namely, if $P\in\mathscr{C}(k)$ is a $k$-point,  then there exists a branched covering $\mathscr{C}_P(K)$
of  $\mathscr{C}(k)$, such that the number field $K$ is ramified in $S$ over $k$. 
The construction does not depend on the choice of the $k$-point $P$. 
It is known from item (i)  that there exists only a finite number of the pairs $(\mathscr{C}_P(K), S)$
and, therefore, only a finite number of the $k$-points on the curve  $\mathscr{C}(k)$. 

\bigskip
Corollary \ref{cor4.6} follows from item (ii). 
\end{proof}

\bigskip
\begin{remark}
It follows from the proof of theorem \ref{thm4.1} and corollary \ref{cor4.6}, 
that the number  $|\mathscr{C}(k)|$ of the rational points on the higher genus 
curve depends on  the number 
of isomorphic $n$-dimensional noncommutative tori  given  by the distinct skew-symmetric 
matrices (\ref{skew})  [Rieffel 1990] \cite[Question 4.1]{Rie1}.
To the best of our knowledge, the Rieffel's Problem is unsolved except for  the special cases  (see  example \ref{exm4.3}). 
\end{remark}

\bibliographystyle{amsplain}


\end{document}